\newcommand{\R}{\mathbb R}
\newcommand{\Z}{\mathbb Z}
\newcommand{\eps}{\varepsilon}
\newtheorem{teo}{Theorem}
\newtheorem{lemma}{Lemma}
\newtheorem{theorem}[teo]{Theorem}
\newtheorem{defin}[teo]{Definition}
\theoremstyle{remark}
\newtheorem{rem}[teo]{Remark}
\DeclareMathOperator{\Id}{Id}
\DeclareMathOperator{\cat}{cat}
\DeclareMathOperator{\dist}{dist}
\begin{document}
\title{On the number of nodal solutions for a nonlinear elliptic problem on symmetric
Riemannian manifolds}
\author{M.Ghimenti\thanks{Dipartimento di Matematica Applicata,
Universit\`a di Pisa, via Buonarroti 1c, 56100, Pisa, Italy, e-mail
{\tt ghimenti@mail.dm.unipi.it, a.micheletti@dma.it} },
A.M.Micheletti\addtocounter{footnote}{-1}\footnotemark }
\date{}
\maketitle

\begin{abstract}
\noindent We consider the problem
$-\eps^2\Delta_g u+u=|u|^{p-2}u$ in $M$, where $(M,g)$ is a symmetric
Riemannian manifold. We give a multiplicity result for
antisymmetric changing sign solutions.

\noindent\textbf{Keywords:} Riemannian Manifolds, Nodal Solutions,
Topological Methods

\noindent\textbf{Mathematics Subject Classification:}  35J60, 58G03
\end{abstract}
\section{Introduction}
Let $(M,g)$ be a smooth connected compact Riemannian manifold of finite dimension
$n\geq 2$ embedded in $\R^N$.
We consider the problem
\begin{equation}\label{P}\tag{${\mathscr P}$}
-\eps^2\Delta_g u+u=|u|^{p-2}u\text{ in } M, \ \
u\in H^1_g(M)
\end{equation}
where $2<p<2*=\frac {2N}{N-2}$, if $N\geq 3$.

Here $H^1_g(M)$ is the completion of $C^\infty(M)$ with respect to
\begin{equation}
||u||^2:g=\int_M |\nabla_g u|^2+u^2d\mu_g
\end{equation}

It is well known that the problem (\ref{P}) has a mountain pass solution $u_\eps$.
In \cite{BP05} the authors showed that $u_\eps$ has a spike layer and its peak point
converges to the maximum point of the scalar curvature of $M$ as $\eps$ goes to 0.

Recently there have been some results on the influence of the topology and
the geometry of $M$ on the number of solutions of the problem.
In \cite{BBM07} the authors proved that, if $M$ has a rich topology, problem (\ref{P}) has
multiple solutions. More precisely they show that problem (\ref{P}) has at least
$\cat(M)+1$ positive nontrivial solutions for $\eps$ small enough. Here $\cat(M)$
is the Lusternik-Schnirelmann category of $M$.
In \cite{Vta} there is the same result for a more general nonlinearity. Furthermore in \cite{Hta}
it was shown that the number of solution is influenced by the topology of a suitable
subset of $M$ depending on the geometry of $M$. To point out the role of the geometry
in finding solutions
of problem (\ref{P}), in \cite{MP2ta}
it was shown that for any stable critical point of the scalar curvature
it is possible to build positive single peak solutions. The peak of these solutions approaches
such a critical point as $\eps$ goes to zero.

Successively in \cite{DMP} the authors build positive $k$-peak solutions whose peaks collapse
to an isolated local minimum point of the scalar curvature as $\eps$ goes to zero.

The first result on sign changing solution is in \cite{MPta} where it is showed the
existence of a solution
with one positive peak $\eta_1^\eps$ and one negative peak $\eta_2^\eps$ such that, as $\eps$ goes to zero,
the scalar curvature $S_g(\eta_1^\eps)$ (respectively $S_g(\eta_2^\eps)$)
goes to the minimum (resp. maximum)
of the scalar curvature when the scalar curvature of $(M,g)$ is non constant.
Here we give a multiplicity result for changing sign solutions when the Riemannian
manifold $(M,g)$ is symmetric.

We look for solutions of the problem
\begin{equation}\label{Ptau}
\left\{
\begin{array}{ll}
\tag{$\mathscr P_\tau$}
-\eps^2\Delta_g u+u=|u|^{p-2}u&
u\in H^1_g(M);\\&\\
u(\tau x)=-u(x)  & \forall x\in M,
\end{array}
\right.
\end{equation}
where $\tau:\R^N\rightarrow\R^N$ is an
orthogonal linear
transformation  such that $\tau\neq\Id$,
$\tau^2=\Id$, $\Id$ being the identity of $\R^N$. Here $M$ is a compact connected Riemannian manifold
of dimension $n\geq 2$ and $M$ is a regular submanifold of $\R^N$ which is invariant with respect to $\tau$.
Let $M_\tau:=\{x\in M\ :\ \tau x=x\}$ be the set of the fixed points with respect to the
involution $\tau$; in the case $M_\tau\neq \emptyset$ we assume that $M_\tau$ is a regular
submanifold of $M$.

We obtain the following result.
\begin{theorem}\label{mainteo}
The problem \ref{Ptau} has at least $G_\tau-\cat(M-M_\tau)$ pairs of solutions $(u,-u)$
which change sign (exactly once) for $\eps$ small enough
\end{theorem}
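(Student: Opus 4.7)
The plan is to set the problem variationally on the $\tau$-antisymmetric Nehari manifold and then apply the equivariant Lusternik--Schnirelmann category machinery, following the scheme pioneered by Benci--Cerami and already used in \cite{BBM07,MPta} for related elliptic problems on manifolds. Concretely, I would work in the Hilbert subspace
\[
H^1_{g,\tau}(M):=\{u\in H^1_g(M)\ :\ u(\tau x)=-u(x)\ \text{a.e.}\},
\]
note that it is invariant for the energy functional $J_\eps(u)=\tfrac12\int_M(\eps^2|\nabla_g u|^2+u^2)\,d\mu_g-\tfrac1p\int_M|u|^p\,d\mu_g$, and consider its Nehari manifold $\mathcal N_\eps^\tau$. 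By the principle of symmetric criticality every critical point of $J_\eps|_{\mathcal N_\eps^\tau}$ yields a true solution of $(\mathscr P_\tau)$, and pairs of critical points $\{u,-u\}$ correspond to orbits of the $\Z_2$-action $u\mapsto -u$.

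The heart of the proof is to build two equivariant maps whose composition is homotopic to the identity on $M-M_\tau$. First I would construct $\Phi_\eps:M-M_\tau\to\mathcal N_\eps^\tau$ by gluing, via suitable cut-offs in normal coordinates on $M$, a positive bump centered at $\xi$ and the opposite bump centered at $\tau\xi$, both modeled on the ground state $U$ of the limit problem $-\Delta U+U=U^{p-1}$ on $\R^n$, and then rescaling to land on $\mathcal N_\eps^\tau$. By construction $\Phi_\eps(\tau\xi)=-\Phi_\eps(\xi)$, and a standard expansion shows $J_\eps(\Phi_\eps(\xi))=2\eps^n\mathcal E_\infty+o(\eps^n)$ uniformly in $\xi$, where $\mathcal E_\infty$ is the $\R^n$ ground state energy. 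Second I would define an $\eps$-concentration/barycenter map $\beta_\eps:\{u\in\mathcal N_\eps^\tau:J_\eps(u)\le 2\eps^n\mathcal E_\infty+\delta(\eps)\}\to M-M_\tau$ by taking the barycenter of the positive part of $u$ after pulling back through the embedding $M\hookrightarrow\R^N$ and composing with a retraction onto a tubular neighborhood; the $\tau$-antisymmetry guarantees that the positive mass is concentrated away from $M_\tau$, so the image lies in $M-M_\tau$, and $\beta_\eps$ is $\tau$-equivariant with respect to the free $G_\tau$-action on $M-M_\tau$.

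With these two maps in hand, the key technical lemma asserts that $\beta_\eps\circ\Phi_\eps$ is $G_\tau$-homotopic to the inclusion $M-M_\tau\hookrightarrow M-M_\tau$ for $\eps$ sufficiently small; this follows from quantitative $L^p$-concentration estimates on $\Phi_\eps(\xi)$ around $\xi$. The standard abstract category argument (in its $G$-equivariant version) then gives at least $G_\tau\!-\!\cat(M-M_\tau)$ critical orbits for $J_\eps$ in the relevant sublevel set, and hence as many $\{u,-u\}$ pairs of solutions of $(\mathscr P_\tau)$.

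Finally, to upgrade these solutions to ones changing sign \emph{exactly once} I would use a level estimate: if $u$ had at least two nodal domains of the same sign, then by evaluating $J_\eps$ on each nodal component one obtains $J_\eps(u)\ge 3\eps^n\mathcal E_\infty+o(\eps^n)$, contradicting the fact that the min--max levels produced above lie below $2\eps^n\mathcal E_\infty+\delta(\eps)$. Combined with antisymmetry ($u$ must have both a positive and a negative part), this forces exactly one nodal set. I expect the main obstacle to be precisely this control of the min--max levels together with the proof that $\beta_\eps\circ\Phi_\eps\simeq\Id$ equivariantly: both require uniform-in-$\eps$ expansions of $J_\eps$ on the Nehari manifold in the curved manifold setting, where the exponential chart and the metric expansion introduce curvature corrections that have to be shown to be lower order, exactly as in \cite{BBM07,MPta}.
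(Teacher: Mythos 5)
Your overall strategy is the one the paper follows (antisymmetric Nehari manifold, a two--bump equivariant map $\Phi_\eps$, a barycenter map $\beta$, equivariant Lusternik--Schnirelmann category, and an energy--quantization argument for the nodal count), but two of your steps fail as stated, and both failures occur at the same place: near the fixed--point set $M_\tau$. First, you define $\Phi_\eps$ on all of $M-M_\tau$ and claim $J_\eps(\Phi_\eps(\xi))=2\eps^n\mathcal E_\infty+o(\eps^n)$ \emph{uniformly} in $\xi$. This is false: as $\xi\to M_\tau$ the bump at $\xi$ and the reflected negative bump at $\tau\xi$ overlap and cancel, the $L^p$ norm of $w_{\eps,\xi}-w_{\eps,\tau\xi}$ degenerates, the Nehari projection constant blows up, and the energy is not uniformly controlled. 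Second, and more seriously, your barycenter map cannot be shown to land in $M-M_\tau$: antisymmetry does \emph{not} force the positive mass away from $M_\tau$. One can place a bump at distance $R\eps$ on one side of $M_\tau$ and its reflected negative bump on the other side; the interaction is exponentially small in $R$, so such ``dipoles'' lie in ${\cal N}_\eps^\tau\cap J_\eps^{2(m_\infty+\delta)}$ while the barycenter of $u^+$ is arbitrarily close to (and, after projecting to $M$, possibly on) $M_\tau$. So the map $\beta_\eps$ into $M-M_\tau$ is not well defined on the relevant sublevel set, and the claimed equivariant homotopy to the identity of $M-M_\tau$ collapses.

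The paper circumvents both problems with one device: it fixes $d>0$ small, restricts $\Phi_\eps^\tau$ to $M_d^-=\{x\in M:\dist(x,M_\tau)\geq d\}$ (so the two bumps have disjoint supports and the energy estimate is uniform), lets $\beta$ take values only in the tubular neighborhood $M_d=\{x\in\R^N:\dist(x,M)\leq d\}$ (no claim about avoiding $M_\tau$ is needed), and replaces $G_\tau-\cat(M-M_\tau)$ by the category of the \emph{inclusion} $M_d^-\hookrightarrow M_d$, which coincides with $G_\tau-\cat(M-M_\tau)$ for $d$ small. The homotopy $\beta\circ\Phi_\eps^\tau\simeq(M_d^-\hookrightarrow M_d)$ then only has to take place inside $M_d$, where the estimate $|\beta(\Phi_\eps^\tau(q))-q|\leq c\eps$ suffices. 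You should reformulate your two maps and the homotopy statement in this relative form; with that modification your argument, including the final nodal-count step (each nodal component of a solution lies in ${\cal N}_\eps$ and carries energy at least $m_\eps\to m_\infty$, so more than one positive component contradicts the level bound $2(m_\infty+\delta)$), matches the paper's proof.
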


Here
$G_\tau-\cat$ is the $G_\tau$-equivariant Lusternik Schnirelmann category for the group
$G_\tau=\{\Id,\tau\}$.

In \cite{CC03} the authors prove a result of this type for
the Dirichlet problem

\begin{equation}\label{Plambda}
\left\{
\begin{array}{ll}
\tag{$\mathscr P_\lambda$}
-\Delta u-\lambda u-|u|^{2^*-2}u=0& u\in H^1_0(\Omega);
\\
u(\tau x)=-u(x). &
\end{array}
\right.
\end{equation}
Here $\Omega$ is a bounded smooth domain invariant
with respect to $\tau$ and $\lambda$ is a positive parameter.

We point out that in the case of the unit sphere $S^{N-1}\subset \R^N$ (with the metric $g$ induced
by the metric of $\R^N$) the theorem of existence of changing sign solutions
of \cite{MPta} can not be used because it holds for manifold of non constant curvature. 
Instead, we can apply Theorem \ref{mainteo} to obtain sign changing solutions 
because we can consider
$\tau=-\Id$, and we have $G_\tau-\cat S^{N-1}=N$.

Equation like (\ref{P}) has been extensively studied in a flat bounded domain $\Omega\subset \R^N$.
In particular, we would like to compare problem (\ref{P}) with the following Neumann problem
\begin{equation}\label{pn}\tag{$\mathscr P_N$}
\left\{
\begin{array}{ll}
-\eps^2\Delta u +u=|u|^{p-2}u& \text{in }\Omega;\\
\frac{\partial u}{\partial \nu}=0&\text{ in }\partial\Omega.
\end{array}
\right.
\end{equation}
Here $\Omega$ is a smooth bounded domain of $\R^N$ and $\nu$ is the unit outer normal to $\Omega$.
Problems (\ref{P}) and (\ref{pn}) present many similarities. We recall some classical results
about the Neumann problem.

In the fundamental papers \cite{LNT,NT1,NT2}, Lin, Ni and Takagi established the existence of
least-energy solution to (\ref{pn}) and showed that for $\eps$ small enough the least energy
solution has a boundary spike, which approaches the maximum point of the mean curvature $H$ of
$\partial\Omega$, as $\eps$ goes to zero. Later, in \cite{DFW,W1} it was proved that for any stable
critical point of the mean curvature of the boundary it is possible to construct single
boundary spike layer solutions, while in \cite{G,WW,Li} the authors construct multiple boundary
spike solutions at multiple stable critical points of $H$. Finally, in \cite{DY,GWW} the authors
proved that for any integer $K$ there exists a boundary $K$-peaks solutions, whose peaks
collapse to a local minimum point of $H$.
\section{Setting}

We consider the functional defined on $H^1_g(M)$
\begin{equation}
J_\eps(u)=\frac1{\eps^N}\int_M\left(
\frac12 \eps^2|\nabla_gu|^2+\frac12 |u|^2-\frac1p|u|^p
\right)d\mu_g.
\end{equation}
It is well known that the
critical points of $J_\eps(u)$
constrained on the Nehari manifold
\begin{equation}
{\cal N}_\eps=\left\{
u\in H^1_g\smallsetminus \{0\}\ :\ J'_\eps(u)u=0 \right\}
\end{equation}
are non trivial solution of problem (\ref{P}).

The transformation $\tau:M\rightarrow M$ induces a transformation on $H^1_g$
we define the linear operator $\tau^*$ as

\begin{eqnarray*}
\tau^*&:&H^1_g(M)\rightarrow H^1_g(M)\\
&&\tau^*(u(x))=-u(\tau(x))
\end{eqnarray*}

and $\tau^*$ is a selfadjoint operator with respect to the scalar product on $H^1_g(M)$
\begin{equation}
\langle u,v\rangle_\eps=\frac1{\eps^N}\int_M\big(
 \eps^2\nabla_gu\cdot\nabla_gv+ u\cdot v \big) d\mu_g.
\end{equation}
Moreover $||\tau^*u||_{L^p(M)}=||u||_{L^p(M)}$, and
$||\tau^*u||_\eps=||u||_\eps$, thus $J_\eps(\tau^*u)=J_\eps(u)$.
Then, for the Palais principle, the nontrivial solutions of (\ref{Ptau}) are the critical points of the restriction
of $J_\eps$ to the $\tau$-invariant Nehari manifold
\begin{equation}
{\cal N}_\eps^\tau=\{u\in {\cal N}_\eps \ :\ \tau^*u=u\}={\cal N}_\eps\cap H^\tau.
\end{equation}
Here $H^\tau=\{u\in H^1_g\ :\ \tau^* u=u\}$.

In fact, since $J_\eps(\tau^* u)=J_\eps(u)$ and $\tau^*$ is a selfadjoint operator we have
\begin{equation}
\langle \nabla J_\eps(\tau^*u),\tau^*\varphi\rangle_\eps=
\langle \nabla J_\eps( u), \varphi\rangle_\eps\ \ \forall\varphi\in H^1_g(M).
\end{equation}
Then $\nabla J_\eps( u)=\tau^* \nabla J_\eps(\tau^*u)=\tau^* \nabla J_\eps(u)$ if $\tau^*u=u$.

We set
\begin{eqnarray}
&&m_\infty=\inf\limits_{\int_{\R^N} |\nabla u|^2+u^2=\int_{\R^N}|u|^p}
\frac 12 \int_{\R^N} |\nabla u|^2+u^2-\frac 1p\int_{\R^N}|u|^p;\\
\nonumber &&\\
&&m_\eps=\inf\limits_{u\in {\cal N}_\eps}J_\eps;\\
&&m_\eps^\tau=\inf\limits_{u\in {\cal N}_\eps^\tau}J_\eps.
\end{eqnarray}
\begin{rem}\label{remps}
It is easy to verify that $J_\eps$ satisfies the Palais Smale condition on
${\cal N}_\eps^\tau$.
Then there exists $v_\eps$ minimizer of $m_\eps^\tau$ and $v_\eps$ is a critical point for
$J_\eps$ on $H^1_g(M)$. Thus $v_\eps^+$ and $v_\eps^-$
belong to ${\cal N}_\eps$, then $J_\eps(v_\eps)\geq 2m_\eps$.
\end{rem}

We recall some facts about equivariant Lusternik-Schnirelmann theory. If $G$ is a compact Lie
group, then a $G$-space is a topological space $X$ with a continuous $G$-action
$G \times X    \rightarrow X$,
$(g, x)\mapsto gx$. A $G$-map is a continuous function $f : X  \rightarrow Y$
between $G$-spaces $X$ and $Y$
which is compatible with the $G$-actions, i.e. $f (gx) = gf (x)$ for all $x \in X$,
$g \in G$. Two
$G$-maps $f_0$, $f_1 : X \rightarrow Y$ are $G$-homotopic if there is a homotopy
$\theta : X \times [0, 1] \rightarrow Y$ such
that
$\theta(x, 0) = f_0(x)$,
$\theta(x, 1) = f_1(x)$ and
$\theta(gx, t) = g\theta (x, t)$ for all$x\in X$, $g \in G$,
$t \in [0, 1]$. A subset $A$ of a $X$ is $G$-invariant if $ga \in A$ for every $a\in A$,
$g \in G$. The $G$-orbit of a point $x \in X$ is the set $Gx = \{gx : g \in G\}$.
\begin{defin}
The $G$-category of a $G$-map $f : X \rightarrow Y$ is the smallest number $k = G-\cat(f)$
of open $G$-invariant subsets $X_1, \dots , X_k$ of $X$ which cover $X$ and which have the
property that, for each $i = 1,\dots , k$, there is a point $y_i \in Y$ and a $G$-map
$\alpha_i : X_i \rightarrow Gy_i\subset Y$
such that the restriction of $f$ to $X_i$ is $G$-homotopic to $\alpha_i$.
If no such covering exists we define
$G-\cat(f)=\infty$.
\end{defin}
In our applications, $G$ will be the group with two elements, acting as $G_\tau = \{\Id, \tau\}$
on $\Omega$, and as $\Z/2 = \{1,-1\}$ by multiplication on the Nehari manifold
${\cal N}_\eps^\tau$.
We remark the following result on the equivariant category.
\begin{teo}\label{castroclapp}
Let $\phi : M \rightarrow \R$ be an even $C^1$ functional on a complete $C^{1,1}$ submanifold $M$
of a Banach space which is symmetric with respect to the origin. Assume that $\phi$ is bounded
below and satisfies the Palais Smale condition $(PS)_c$ for every $c\leq d$.
Then $\phi$ has at least $\Z/2-\cat(\phi^d)$
antipodal pairs $\{u,-u\}$ of critical points with critical values $\phi(\pm u)\leq d$.
\end{teo}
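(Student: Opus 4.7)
The plan is to adapt the classical minimax scheme of Lusternik--Schnirelmann theory to the $\Z/2$-equivariant setting, where the group $\Z/2=\{1,-1\}$ acts on $M$ by the antipodal map $u\mapsto -u$. Since $\phi$ is even, one first establishes an \emph{equivariant deformation lemma}: using the Palais--Smale condition at each $c\le d$ together with the averaging trick $X\mapsto \frac12(X(u)-X(-u))$ applied to an ordinary pseudo-gradient vector field on the $C^{1,1}$ submanifold $M$, one obtains a locally Lipschitz, $\Z/2$-equivariant pseudo-gradient. The associated flow then produces, for any regular value $c\le d$ and any small $\eps>0$, an equivariant homotopy $\eta:M\times[0,1]\to M$ commuting with the antipodal map and sending $\phi^{c+\eps}$ into $\phi^{c-\eps}$, while leaving the sublevel sets below $c-\eps$ invariant.

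Next I would define the sequence of minimax values
\begin{equation*}
c_k=\inf\bigl\{a\in\R\ :\ \Z/2\text{-}\cat(\phi^a)\ge k\bigr\},\qquad k=1,\ldots,\Z/2\text{-}\cat(\phi^d),
\end{equation*}
and observe that these are finite (because $\phi$ is bounded below) and satisfy $c_1\le c_2\le\cdots\le c_{\Z/2\text{-}\cat(\phi^d)}\le d$. The core argument is then standard: if some $c_k$ were not a critical value, the equivariant deformation $\eta(\cdot,1)$ applied to an open $\Z/2$-invariant covering of $\phi^{c_k+\eps}$ that realizes $\Z/2\text{-}\cat(\phi^{c_k+\eps})<k$ would, by the $\Z/2$-homotopy invariance and monotonicity of $\Z/2\text{-}\cat$, yield $\Z/2\text{-}\cat(\phi^{c_k-\eps})\ge k$, contradicting the definition of $c_k$. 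Hence each $c_k$ is a critical value, providing at least one antipodal pair of critical points with critical value $\le d$ at each level.

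To obtain the full count of $\Z/2\text{-}\cat(\phi^d)$ antipodal pairs, one has to deal with the possibility that the $c_k$ coincide. I would treat this with the classical refinement: if $c=c_k=c_{k+1}=\cdots=c_{k+p}$ for some $p\ge1$, consider the $\Z/2$-invariant compact critical set $K_c=\{u\in M:\phi(u)=c,\,\nabla\phi(u)=0\}$ guaranteed by (PS)$_c$. Using continuity of $\Z/2\text{-}\cat$ on compact $\Z/2$-invariant sets together with a deformation that pushes $\phi^{c+\eps}$ into $N\cup\phi^{c-\eps}$ for a small $\Z/2$-invariant neighborhood $N$ of $K_c$, one deduces $\Z/2\text{-}\cat(K_c)\ge p+1$; since $\Z/2$ acts freely on $K_c$ (the origin is not a critical point because $\phi$ is assumed only as stated and one discards it if needed), this forces $K_c$ to contain at least $p+1$ antipodal pairs.

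The main obstacle is the equivariant deformation lemma on a $C^{1,1}$ submanifold of a Banach space rather than on a Hilbert manifold: the averaging trick used to symmetrize a pseudo-gradient requires some care because the antipodal map sends $T_uM$ to $T_{-u}M$, so one must either work with a $\Z/2$-invariant tubular retraction onto $M$ or phrase everything in terms of pseudo-gradients in the ambient Banach space and project. Once that technical point is handled, the rest of the argument is a routine transcription of the non-equivariant Lusternik--Schnirelmann scheme with $\cat$ replaced everywhere by $\Z/2\text{-}\cat$.
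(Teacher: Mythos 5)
The paper offers no proof of Theorem \ref{castroclapp}: it is recalled as a known abstract result of equivariant Lusternik--Schnirelmann theory (essentially the multiplicity theorem of \cite{CC03}) and is used as a black box, so there is nothing in the paper to compare your argument against line by line. Judged on its own terms, your sketch is the standard equivariant minimax scheme and is essentially correct: the symmetrization $\tilde X(u)=\frac12\bigl(X(u)-X(-u)\bigr)$ of a pseudo-gradient is odd and remains a pseudo-gradient, because $\phi$ even gives $\phi'(-u)=-\phi'(u)$ while symmetry of $M$ gives $T_{-u}M=-T_uM$, so $-X(-u)\in T_uM$ and the defining inequalities are preserved; the values $c_k$ are finite, nondecreasing, bounded by $d$, and critical by the deformation argument; and the degenerate case $c_k=\cdots=c_{k+p}$ is handled by the usual neighborhood-of-$K_c$ refinement, with $(PS)_c$ supplying the compactness of $K_c$.

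Two points need to be stated as hypotheses rather than waved at. First, the count of antipodal \emph{pairs} requires the $\Z/2$-action on $M$ to be free, i.e.\ $0\notin M$; your parenthetical ``one discards it if needed'' is not enough, since if $0$ belonged to $M$ and were a critical point, the orbit $\{0\}$ is not a pair and the final step ``$\Z/2-\cat(K_c)\ge p+1$ implies at least $p+1$ pairs'' breaks down. In the intended application $M$ is a Nehari manifold, so $0\notin M$ automatically, but the freeness should be made explicit in the abstract statement. Second, $\Z/2-\cat(\phi^d)$ must be read as the equivariant category of the inclusion $\phi^d\hookrightarrow M$ in the sense of the paper's Definition of the $G$-category of a map, with each piece $G$-retractable onto a free orbit $\{y,-y\}$; with that reading your uses of monotonicity and deformation invariance are the correct ones. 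Modulo these clarifications the proposal is a faithful reconstruction of the argument one finds in \cite{CC03}.
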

\section{Sketch of the proof}
In our case we consider the even positive $C^2$ functional $J_\eps$ on the $C^2$
Nehari manifold ${\cal N}_\eps^\tau$ which is symmetric with respect to the origin.
As claimed in Remark \ref{remps}, $J_\eps$ satisfies Palais Smale
condition on ${\cal N}_\eps^\tau$. Then we can apply
Theorem \ref{castroclapp} and our aim is to get an estimate of this lower bound for the number of solutions.
For $d>0$ we consider
\begin{eqnarray}
M_d&=&\{x\in \R^N\ :\ \dist(x,M)\leq d\};\\
M_d^-&=&\{x\in M\ :\ \dist(x,M_\tau)\geq d\}.
\end{eqnarray}
We choose $d$ small enough such that
\begin{eqnarray}
&&G_\tau-\cat_{M_d} M_d=G_\tau-\cat_M M\\
&&G_\tau-\cat_M M_d^-=G_\tau-\cat_M (M-M_\tau)
\end{eqnarray}
Now we build two continuous operator
\begin{eqnarray}
\Phi_\eps^\tau&:&M_d^-\rightarrow {\cal N}_\eps^\tau\cap J_\eps^{2(m_\infty+\delta)};\\
\beta&:&{\cal N}_\eps^\tau\cap J_\eps^{2(m_\infty+\delta)}\rightarrow M_d,
\end{eqnarray}
such that $\Phi_\eps^\tau(\tau q)=-\Phi_\eps^\tau(q)$, $\tau\beta(u)=\beta(-u)$ and
$\beta\circ\Phi_\eps^\tau$ is $G_\tau$ homotopic to the inclusion $M_d^-\rightarrow M_d$.

By equivariant category theory we obtain
\begin{multline}
G_\tau-\cat_M (M-M_\tau)=G_\tau-\cat(M_d^-\hookrightarrow M_d)=\\
=G_\tau-\cat \beta\circ\Phi_\eps^\tau
\leq \Z_2-\cat {\cal N}_\eps^\tau\cap J_\eps^{2(m_\infty+\delta)}
\end{multline}
\section{Technical lemmas}

First of all, we recall that there exists a unique positive spherically symmetric function $U\in H^1(\R^n)$
such that
\begin{equation}
-\Delta U+U=U^{p-1} \text{ in }\R^n
\end{equation}
It is well known that $U_\eps(x)=U\left(\frac x\eps\right)$ is a solution of
\begin{equation}
-\eps^2\Delta U_\eps+U_\eps=U_\eps^{p-1}\text{ in }\R^n.
\end{equation}

Secondly, let us introduce the exponential map $\exp:TM\rightarrow M$ defined on the tangent
bundle $TM$ of $M$ which is a $C^\infty$ map. Then, for $\rho$ sufficiently small
(smaller than the injectivity radius of $M$ and smaller than $d/2$), 
the Riemannian manifold $M$ has a special set of
charts $\{\exp_x:B(0,\rho)\rightarrow M\}$.
Throughout the paper we will use the following notation: $B_g(x,\rho)$ is the open ball in $M$
centered in $x$ with radius $\rho$ with respect to the distance given by the metric $g$.
Corresponding to this chart, by choosing an orthogonal coordinate system
$(x_1,\dots,x_n)\subset \R^n$ and identifying $T_xM$ with $\R^n$ for $x\in M$, we can define
a system of coordinates called {\em normal coordinates}.

Let $\chi_\rho$ be a smooth cut off function such that
\begin{eqnarray*}
\chi_\rho(z)=1&&\text{if }z\in B(0,\rho/2);\\
\chi_\rho(z)=0&&\text{if }z\in \R^n \smallsetminus B(0,\rho);\\
|\nabla \chi_\rho(z)|\leq2&&\text{for all } x.
\end{eqnarray*}
Fixed a point $q\in M$ and $\eps>0$, let us define the function $w_{\eps,q}(x)$ on $M$ as
\begin{equation}
w_{\eps,q}(x)=\left\{
\begin{array}{cl}
U_\eps(\exp_q^{-1}(x))\chi_\rho(\exp_q^{-1}(x))&\text{if }x\in B_g(q,\rho)\\
&\\
0&\text{otherwise}
\end{array}
\right.
\end{equation}
For each $\eps>0$ we can define a positive number $t(w_{\eps,q})$ such that
\begin{equation}
\Phi_\eps(q)=t(w_{\eps,q})w_{\eps,q}\in H^1_g(M)\cap {\cal N_\eps}\text{ for }q\in M.
\end{equation}
Namely, $t(w_{\eps,q})$ turns out to verify
\begin{equation}
t(w_{\eps,q})^{p-2}=\frac{\displaystyle\int_M \eps^2|\nabla_g w_{\eps,q}|^2+|w_{\eps,q}|^2d\mu_g}
{\displaystyle\int_M |w_{\eps,q}|^pd\mu_g}
\end{equation}

\begin{lemma}
Given $\eps>0$ the application $\Phi_\eps(q):M\rightarrow H^1_g(M)\cap {\cal N}_\eps$
is continuous. Moreover, given $\delta>0$ there exists $\eps_0=\eps_0(\delta)$
such that, if $\eps<\eps_0(\delta)$
then $\Phi_\eps(q)\in {\cal N}_\eps\cap J_\eps^{m_\infty+\delta}$.
\end{lemma}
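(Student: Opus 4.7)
The plan is to separate the two assertions: first prove continuity of $\Phi_\eps$ for fixed $\eps$, then carry out a rescaling argument to show that $J_\eps(\Phi_\eps(q))$ is close to $m_\infty$ uniformly in $q$ when $\eps$ is small.

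For continuity, I would observe that the map $q \mapsto \exp_q^{-1}$ is smooth in a tubular neighborhood, so $q \mapsto w_{\eps,q}$ is continuous from $M$ into $H^1_g(M)$. The formula
\[
t(w_{\eps,q})^{p-2}=\frac{\int_M \eps^2|\nabla_g w_{\eps,q}|^2+|w_{\eps,q}|^2\,d\mu_g}{\int_M |w_{\eps,q}|^p\,d\mu_g}
\]
then shows $t(w_{\eps,q})$ depends continuously on $q$ (the denominator is bounded away from zero for each fixed $\eps>0$), hence $\Phi_\eps$ is continuous.

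For the energy estimate, I would use normal coordinates centered at $q$ and then perform the scaling $z=\eps y$ in the chart. In these coordinates $g_{ij}(z)=\delta_{ij}+O(|z|^2)$ and $\sqrt{|g(z)|}=1+O(|z|^2)$, so after scaling the coefficients $g^{ij}(\eps y)$ and the volume density converge pointwise to the Euclidean ones on $B(0,\rho/\eps)\nearrow\R^n$, while the cutoff $\chi_\rho(\eps y)$ tends to $1$ on every compact set. A direct computation shows that the scaled function is $U(y)\chi_\rho(\eps y)$, and by dominated convergence,
\[
\frac{1}{\eps^n}\!\int_M\!\bigl(\eps^2|\nabla_g w_{\eps,q}|^2+|w_{\eps,q}|^2\bigr)d\mu_g\longrightarrow \int_{\R^n}\!|\nabla U|^2+U^2\,dy,
\]
and similarly for the $L^p$ integral. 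Since $U$ lies on the Euclidean Nehari manifold, the ratio defining $t(w_{\eps,q})^{p-2}$ tends to $1$, hence $t(w_{\eps,q})\to 1$. Combining,
\[
J_\eps(\Phi_\eps(q))=\Bigl(\tfrac12-\tfrac1p\Bigr)\,t(w_{\eps,q})^{p}\,\tfrac{1}{\eps^n}\!\int_M|w_{\eps,q}|^p\,d\mu_g \longrightarrow m_\infty .
\]

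The main technical point is that this convergence must be \emph{uniform in} $q\in M$. This is where I expect the only real work: one must choose $\rho$ smaller than the injectivity radius of $M$, and use compactness of $M$ together with the uniform $C^2$-bounds on the metric coefficients expressed in normal coordinates (which are available because $M$ is smooth and compact) to obtain uniform majorants for the dominated convergence argument. Once uniformity is established, given $\delta>0$ one can choose $\eps_0(\delta)$ so that $|J_\eps(\Phi_\eps(q))-m_\infty|<\delta$ for all $q\in M$ and all $\eps<\eps_0(\delta)$, which gives $\Phi_\eps(q)\in\mathcal{N}_\eps\cap J_\eps^{m_\infty+\delta}$ as required.
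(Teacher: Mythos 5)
Your proposal is correct in outline, and it reconstructs exactly the argument the paper relies on: the paper gives no proof of this lemma itself, referring to \cite[Proposition 4.2]{BBM07}, and that proof is precisely the rescaling-in-normal-coordinates computation with uniformity in $q$ obtained from compactness of $M$. The one point worth making explicit is that the dominating function for your limit $\frac{1}{\eps^n}\int_M|w_{\eps,q}|^p\,d\mu_g\to\int_{\R^n}U^p\,dy$ (and hence the uniformity in $q$) comes from the exponential decay of $U$ combined with the $q$-uniform estimate $\bigl|\,|g_q(z)|^{1/2}-1\bigr|\le C|z|^2$ on $B(0,\rho)$, which you correctly identify as the crux.
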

For the proof see \cite[ Proposition 4.2]{BBM07}.

At this point, fixed a point $q\in M_d^-$, let us define the function
\begin{equation}
\Phi_\eps^\tau(q)=t(w_{\eps,q})w_{\eps,q}-t(w_{\eps,\tau q})w_{\eps,\tau q}
\end{equation}
\begin{lemma}\label{lemma5}
Given $\eps>0$ the application
$\Phi^\tau_\eps(q):M_d^-\rightarrow H^1_g(M)\cap {\cal N}^\tau_\eps$
is continuous. Moreover, given $\delta>0$ there exists $\eps_0=\eps_0(\delta)$
such that, if $\eps<\eps_0(\delta)$
then $\Phi^\tau_\eps(q)\in {\cal N}^\tau_\eps\cap J_\eps^{2(m_\infty+\delta)}$.
\end{lemma}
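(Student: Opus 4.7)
The plan is to verify three things: (i) $\Phi_\eps^\tau(q)\in{\cal N}_\eps^\tau$ for every $q\in M_d^-$, (ii) the energy estimate $J_\eps(\Phi_\eps^\tau(q))\le 2(m_\infty+\delta)$ when $\eps<\eps_0(\delta)$, and (iii) continuity of $\Phi_\eps^\tau$ on $M_d^-$. All three rest on a single geometric observation: when $q\in M_d^-$, the supports of $w_{\eps,q}$ and $w_{\eps,\tau q}$ are disjoint. Indeed, they are contained in $B_g(q,\rho)$ and $B_g(\tau q,\rho)$ respectively, and since $\rho<d/2$ it suffices to check $\dist_g(q,\tau q)\ge 2d$.

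To obtain that geometric estimate, I would use a midpoint argument: if $\gamma\colon[0,1]\to M$ is a minimizing geodesic from $q$ to $\tau q$, then $t\mapsto\tau(\gamma(1-t))$ is another such geodesic. For $d$ small enough that minimizing geodesics between points at distance at most $2d$ are unique, this forces $\tau(\gamma(1/2))=\gamma(1/2)$, so $\gamma(1/2)\in M_\tau$ and $\dist_g(q,\tau q)\ge 2\dist_g(q,M_\tau)\ge 2d$ on $M_d^-$. (If $M_\tau=\emptyset$, then $M_d^-=M$ and compactness together with the fixed-point-freeness of $\tau$ gives $\inf_{q\in M}\dist_g(q,\tau q)>0$; shrinking $\rho$ accordingly still yields disjoint supports.)

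Given this disjointness, the three conclusions fall out quickly. For membership in $H^\tau$: because $\tau$ is an isometry and both $U$ and $\chi_\rho$ are radial, writing things out in normal coordinates at $q$ and $\tau q$ (and using that $d\tau_q$ is orthogonal) yields $w_{\eps,\tau q}(\tau x)=w_{\eps,q}(x)$; the $\tau$-invariance of the defining integrals then gives $t(w_{\eps,\tau q})=t(w_{\eps,q})$, and substituting shows $\tau^*\Phi_\eps^\tau(q)=\Phi_\eps^\tau(q)$. For membership in ${\cal N}_\eps$: disjointness makes both $\|\cdot\|_\eps^2$ and $\int_M|\cdot|^p\,d\mu_g$ split additively into the two-bump contributions, reducing the Nehari identity for $\Phi_\eps^\tau(q)$ to the identities already satisfied by each $\Phi_\eps(\cdot)$. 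For the energy bound, the same additivity plus evenness of $J_\eps$ gives $J_\eps(\Phi_\eps^\tau(q))=J_\eps(\Phi_\eps(q))+J_\eps(\Phi_\eps(\tau q))$, and the preceding lemma controls each term by $m_\infty+\delta$ when $\eps<\eps_0(\delta)$. Continuity is automatic, since $\Phi_\eps^\tau(q)=\Phi_\eps(q)-\Phi_\eps(\tau q)$ is a difference of compositions of continuous maps.

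The main, and essentially only, nontrivial obstacle is the geodesic-midpoint argument in the disjoint-support step; it requires uniqueness of minimizing geodesics joining $q$ to $\tau q$, which need not hold if $q$ sits on the cut locus of $\tau q$. Shrinking the constant $d$ beforehand, relative to the injectivity and convexity radii of $(M,g)$, secures this uniformly on $M_d^-$.
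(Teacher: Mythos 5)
Your proposal is correct and follows essentially the same route as the paper: radial symmetry of $U\chi_\rho$ and the isometry property of $\tau$ give $w_{\eps,\tau q}(\tau x)=w_{\eps,q}(x)$ and $t(w_{\eps,q})=t(w_{\eps,\tau q})$, hence $\tau^*\Phi_\eps^\tau(q)=\Phi_\eps^\tau(q)$, while disjointness of the supports $B_g(q,\rho)$ and $B_g(\tau q,\rho)$ splits the integrals and reduces both the Nehari identity and the bound $J_\eps(\Phi_\eps^\tau(q))=2J_\eps(\Phi_\eps(q))\le 2(m_\infty+\delta)$ to the single-bump lemma. Your geodesic-midpoint argument is a welcome elaboration of the disjointness claim, which the paper merely asserts from $\rho<d/2$ and $q\in M_d^-$.
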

\begin{proof}
Since $U_\eps(z)\chi_\rho(z)$ is radially symmetric we set
$U_\eps(z)\chi_\rho(z)=\tilde U_\eps(|z|)$. We recall that
\begin{eqnarray*}
|\exp^{-1}_{\tau q}\tau x|&=&d_g(\tau x,\tau q)=d_g(x,q)=|\exp^{-1}_{q} x|;\\
|\exp^{-1}_{q}\tau x|&=&d_g(\tau x, q)=d_g(x,\tau q).
\end{eqnarray*}
We have
\begin{multline*}
\tau^* \Phi_\eps^\tau(q)(x)=
-t(w_{\eps,q})w_{\eps,q}(\tau x)+t(w_{\eps,\tau q})w_{\eps,\tau q}(\tau x)=\\
=-t(w_{\eps,q})\tilde U_\eps(|\exp^{-1}_q(\tau x)|)+t(w_{\eps,\tau q})
\tilde U_\eps(|\exp^{-1}_{\tau q}(\tau x)|)=\\
=t(w_{\eps,\tau q})\tilde U_\eps(|\exp^{-1}_q(x)|)-t(w_{\eps,q})
\tilde U_\eps(|\exp^{-1}_{q}(\tau x)|)=\\
=t(w_{\eps,q})\tilde U_\eps(|\exp^{-1}_q(x)|)-
t(w_{\eps,q})\tilde U_\eps(|\exp^{-1}_{\tau q}(x)|),
\end{multline*}
because by the definition we have $t(w_{\eps,q})=t(w_{\eps,\tau q})$.

Moreover by definition the support of the function $\Phi_\eps^\tau$ is
$B_g(q,\rho)\cup B_g(\tau q,\rho)$, and $B_g(q,\rho)\cap B_g(\tau q,\rho)=\emptyset$ 
because $\rho<d/2$ and $q\in M_d^-$.
Finally, because
\begin{eqnarray}
&&\int_M|w_{\eps,q}|^\alpha d\mu_g=\int_M|w_{\eps,\tau q}|^\alpha d\mu_g\ \text{ for }\alpha=2,p;\\
&&\int_M|\nabla w_{\eps,q}|^2 d\mu_g=\int_M|\nabla w_{\eps,\tau q}|^2 d\mu_g,
\end{eqnarray}
we have
\begin{equation}
J_\eps(\Phi_\eps^\tau(q))=\left(\frac 12-\frac 1p\right)\frac 1{\eps^n}
\int_M |\Phi_\eps^\tau(q)|^pd\mu_g=2J_\eps(\Phi_\eps(q)).
\end{equation}
Then by previous lemma we have the claim.
\end{proof}
\begin{lemma}
We have $\displaystyle\lim_{\eps\rightarrow0}m_\eps^\tau=2m_\infty$
\end{lemma}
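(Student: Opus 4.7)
The plan is to prove the two inequalities $\limsup_{\eps\to 0}m_\eps^\tau\le 2m_\infty$ and $\liminf_{\eps\to 0}m_\eps^\tau\ge 2m_\infty$ separately. Both parts lean on machinery already built up in the paper, together with the standard fact that $m_\eps\to m_\infty$ as $\eps\to 0$ (which is the analogous asymptotic for the unconstrained Nehari level, as used in \cite{BBM07}).

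For the upper bound I would apply Lemma \ref{lemma5} directly. Fix any $\delta>0$ and any $q\in M_d^-$; then for all $\eps<\eps_0(\delta)$ one has $\Phi_\eps^\tau(q)\in\mathcal N_\eps^\tau$ with $J_\eps(\Phi_\eps^\tau(q))\le 2(m_\infty+\delta)$. Since $m_\eps^\tau=\inf_{\mathcal N_\eps^\tau}J_\eps$, this gives $m_\eps^\tau\le 2(m_\infty+\delta)$, and letting $\delta\to 0$ yields $\limsup m_\eps^\tau\le 2m_\infty$.

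For the lower bound I would use Remark \ref{remps}. The Palais--Smale condition on $\mathcal N_\eps^\tau$ guarantees a minimizer $v_\eps$ with $J_\eps(v_\eps)=m_\eps^\tau$, and by the Palais symmetric criticality principle $v_\eps$ is a genuine critical point of $J_\eps$ on $H^1_g(M)$, hence a solution of \eqref{P}. The antisymmetry condition $\tau^*v_\eps=v_\eps$, i.e. $v_\eps(\tau x)=-v_\eps(x)$, forces $v_\eps$ to change sign, so both $v_\eps^+$ and $v_\eps^-$ are nontrivial. Testing the equation for $v_\eps$ against $v_\eps^\pm$ shows $v_\eps^\pm\in\mathcal N_\eps$, and since their supports are essentially disjoint one gets $J_\eps(v_\eps)=J_\eps(v_\eps^+)+J_\eps(v_\eps^-)\ge 2m_\eps$. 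Thus $m_\eps^\tau\ge 2m_\eps$, and combining with $\lim_{\eps\to 0}m_\eps=m_\infty$ gives $\liminf m_\eps^\tau\ge 2m_\infty$.

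The argument is essentially bookkeeping; there is no single hard step, but the point that requires some care is the splitting $J_\eps(v_\eps)=J_\eps(v_\eps^+)+J_\eps(v_\eps^-)$ with $v_\eps^\pm\in\mathcal N_\eps$. This relies crucially on $v_\eps$ being a critical point on the full space (so that one may test the PDE against $v_\eps^\pm$), not merely a constrained minimizer on $\mathcal N_\eps^\tau$; a minimizer on the $\tau$-invariant Nehari manifold need not have its positive and negative parts individually on $\mathcal N_\eps$ without the symmetric criticality step. The other ingredient, $\lim m_\eps=m_\infty$, is the manifold analogue of a well-known scaling argument and is already exploited implicitly in \cite{BBM07} and \cite{BP05}.
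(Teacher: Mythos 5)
Your proof is correct and follows essentially the same route as the paper: the upper bound via Lemma \ref{lemma5} applied to $\Phi_\eps^\tau(q)$, and the lower bound $m_\eps^\tau\geq 2m_\eps$ via Remark \ref{remps}, combined with $\lim_{\eps\to 0}m_\eps=m_\infty$ from \cite{BBM07}. Your elaboration of why $v_\eps^{\pm}\in{\cal N}_\eps$ (testing the equation after the symmetric criticality step) is exactly the content the paper compresses into Remark \ref{remps}.
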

\begin{proof}
By the previous lemma and by Remark \ref{remps}
we have that for any $\delta>0$ there exists
$\eps_0(\delta)$ such that, for $\eps<\eps_0(\delta)$
\begin{equation}
2m_\eps\leq m_\eps^\tau\leq 2J_\eps(\Phi_\eps(q))\leq 2(m_\infty+\delta).
\end{equation}
Since $\displaystyle\lim_{\eps\rightarrow0}m_\eps=m_\infty$ (see \cite[Remark 5.9]{BBM07})
we get the claim.
\end{proof}

For any function $u\in {\cal N}_\eps^\tau$ we can define a point $\beta(u)\in \R^N$ by
\begin{equation}
\beta(u)=\frac{\displaystyle \int_M x|u^+(x)|^pd\mu_g}{\displaystyle \int_M |u^+(x)|^pd\mu_g}
\end{equation}
\begin{lemma}\label{lemma7}
There exists $\delta_0$ such that, for any $0<\delta<\delta_0$ and any $0<\eps<\eps_0(\delta)$
(as in Lemma \ref{lemma5}) and for any function
$u\in {\cal N}_\eps^\tau\cap J_\eps^{2(m_\infty+\delta)}$, it holds $\beta(u)\in M_d$.
\end{lemma}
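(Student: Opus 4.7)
The plan is to reduce the antisymmetric statement to its positive counterpart already handled in \cite{BBM07}, using the $\tau$-antisymmetry in an essential way. If $u\in\mathcal N_\eps^\tau$, then $u(\tau x)=-u(x)$ together with the fact that $\tau$ is an isometry (so $\exp_{\tau q}^{-1}\tau x$ has the same norm as $\exp_q^{-1}x$) gives $u^+(\tau x)=u^-(x)$ pointwise. Consequently
$$\int_M |\nabla_g u^+|^2\,d\mu_g=\int_M |\nabla_g u^-|^2\,d\mu_g,\qquad \int_M |u^+|^\alpha\,d\mu_g=\int_M |u^-|^\alpha\,d\mu_g\quad(\alpha=2,p).$$
Because $u^+$ and $u^-$ have disjoint supports, the Nehari identity $u\in\mathcal N_\eps$ splits into
$$\eps^2\int_M|\nabla_g u^+|^2\,d\mu_g+\int_M (u^+)^2\,d\mu_g=\int_M|u^+|^p\,d\mu_g,$$
so $u^+\in\mathcal N_\eps$ (and similarly $u^-\in\mathcal N_\eps$), and the energies split as $J_\eps(u)=2J_\eps(u^+)$. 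In particular, $J_\eps(u)\le 2(m_\infty+\delta)$ yields $J_\eps(u^+)\le m_\infty+\delta$, and by definition $\beta(u)$ is precisely the barycenter of $|u^+|^p$.

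It therefore suffices to show that for $\delta$ small and $\eps<\eps_0(\delta)$, any nonnegative $v\in\mathcal N_\eps$ with $J_\eps(v)\le m_\infty+\delta$ has its barycenter in $M_d$. I would prove this by contradiction, extracting sequences $\eps_n\to 0$, $\delta_n\to 0$, $v_n\in\mathcal N_{\eps_n}$ with $J_{\eps_n}(v_n)\le m_\infty+\delta_n$ and $\beta(v_n)\notin M_d$. Covering $M$ by finitely many normal balls of a fixed small radius and performing a Lions-type non-vanishing argument, one shows that the $L^p$-mass of $v_n$ cannot split into two or more bubbles: two disjoint concentration points would each carry energy asymptotically bounded below by $m_\infty$, giving a total $\ge 2m_\infty>m_\infty+\delta_n$, a contradiction. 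Hence the mass concentrates in a single ball $B_g(q_n,\rho)$, the rescaled sequence $v_n(\exp_{q_n}(\eps_n\cdot))\chi_\rho$ converges strongly in $H^1(\R^n)$ to the ground state $U$, and therefore $\beta(v_n)\to q_\infty\in M$, contradicting $\beta(v_n)\notin M_d$ for large~$n$.

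The antisymmetric reduction is formal; the genuine obstacle is the concentration step, i.e.\ ruling out mass splitting into multiple bubbles of comparable height. This is exactly the content of the barycenter estimate proved in \cite{BBM07} for the positive case (see in particular their Section~5), so the lemma is obtained by citing that result and applying it to $v=u^+$. Combining with $\beta(u)=\beta(u^+)$ concludes the proof.
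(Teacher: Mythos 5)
Your proposal is correct and follows essentially the same route as the paper: use the antisymmetry $u(\tau x)=-u(x)$ and the fact that $\tau$ preserves $\mu_g$ to get $J_\eps(u)=2J_\eps(u^+)$, hence $J_\eps(u^+)\leq m_\infty+\delta$, and then invoke the barycenter estimate for the positive case from \cite{BBM07} (Proposition 5.10 there). The concentration-compactness sketch you include is exactly the content of that cited result, so it is redundant but not wrong.
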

\begin{proof}
Since $\tau^* u=u$ we set
\begin{eqnarray*}
M^+=\{x\in M\ :\ u(x)>0\}&&M^-=\{x\in M\ :\ u(x)<0\}.
\end{eqnarray*}
It is easy to see that $\tau M^+=M^-$. Then we have
\begin{multline}
J_\eps(u)=\left(\frac 12-\frac 1p\right)\frac 1{\eps^n}
\int_M|u|^pd\mu_g=\\
=\left(\frac 12-\frac 1p\right)\frac 1{\eps^n}
\left[\int_{M^+}|u^+|^pd\mu_g+\int_{M^-}|u^-|^pd\mu_g
\right]=2J_\eps(u^+)
\end{multline}
By the assumption $J_\eps(u)\leq 2(m_\infty+\delta)$ we have $J_\eps(u^+)\leq m_\infty+\delta$
then by Proposition 5.10 of \cite{BBM07} we get the claim.
\end{proof}
\begin{lemma}
There exists $\eps_0>0$ such that for any $0<\eps<\eps_0$ the composition
\begin{equation}
I_\eps=\beta\circ\Phi_\eps^\tau:M_d^-\rightarrow M_d\subset\R^N
\end{equation}
is well defined, continuous, homotopic to the identity and $I_\eps(\tau q)= \tau I_\eps(q)$.
\end{lemma}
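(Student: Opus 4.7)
The plan is to verify the four conclusions in turn: well-definedness, continuity, equivariance, and homotopy to the inclusion. The first three are short bookkeeping; the real work is concentrated in the homotopy, which rests on a single asymptotic estimate.

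\textbf{Well-definedness and continuity.} Fix $\delta\in(0,\delta_0)$ from Lemma \ref{lemma7} and take $\eps<\eps_0(\delta)$ as supplied by Lemma \ref{lemma5}. Then $\Phi_\eps^\tau(q)\in {\cal N}_\eps^\tau\cap J_\eps^{2(m_\infty+\delta)}$, and Lemma \ref{lemma7} puts $\beta$ of this sublevel inside $M_d$, so $I_\eps$ takes values in $M_d$. The map $\beta$ is continuous on the set where $u^+\not\equiv 0$ as a ratio of continuous $L^p$-functionals, and $\Phi_\eps^\tau$ is continuous by Lemma \ref{lemma5}, so $I_\eps$ is continuous.

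\textbf{Equivariance.} By construction $\Phi_\eps^\tau(\tau q)=-\Phi_\eps^\tau(q)$. Writing $u:=\Phi_\eps^\tau(q)$, the identity $\tau^*u=u$ reads $u(\tau x)=-u(x)$; hence $(-u)^+(x)=u^-(x)=u^+(\tau x)$. Since $\tau$ is orthogonal on $\R^N$ and preserves $M$, its restriction is an isometry of $(M,g)$, so $d\mu_g$ is $\tau$-invariant, and the change of variables $y=\tau x$ gives
\[
\int_M x\,|(-u)^+(x)|^p\, d\mu_g(x)=\int_M \tau y\,|u^+(y)|^p\, d\mu_g(y)=\tau\!\int_M y\,|u^+(y)|^p\, d\mu_g(y),
\]
while $\int_M |(-u)^+|^p d\mu_g=\int_M |u^+|^p d\mu_g$. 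By linearity of $\tau$ we conclude $\beta(-u)=\tau\beta(u)$, i.e.\ $I_\eps(\tau q)=\tau I_\eps(q)$.

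\textbf{Homotopy to the inclusion.} The heart of the proof is the estimate
\[
\lim_{\eps\to 0}\ \sup_{q\in M_d^-} |I_\eps(q)-q|=0,
\]
with $|\cdot|$ the ambient norm in $\R^N$. Because $\rho<d/2$ and $q\in M_d^-$, the two bumps in $\Phi_\eps^\tau(q)$ have disjoint supports $B_g(q,\rho)$ and $B_g(\tau q,\rho)$, so only the positive bump contributes to $\beta(\Phi_\eps^\tau(q))$. Changing variables via normal coordinates $x=\exp_q(\eps z)$ and expanding $\exp_q(\eps z)=q+\eps z+O(\eps^2|z|^2)$ together with $|g_q|^{1/2}(\eps z)=1+O(\eps^2|z|^2)$, the $O(\eps)$ term drops out since $U$ is radial, so $\int_{\R^n}z|U(z)|^p dz=0$. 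Uniformity in $q\in M_d^-$ follows from compactness and the smooth dependence of the exponential map. With this in hand, define
\[
H(q,t)=(1-t)\,I_\eps(q)+t\,q,\qquad q\in M_d^-,\ t\in[0,1].
\]
Equivariance of $H$ follows from that of $I_\eps$: $H(\tau q,t)=(1-t)\tau I_\eps(q)+t\tau q=\tau H(q,t)$. Since $q\in M$, we have $\dist(H(q,t),M)\le |H(q,t)-q|=(1-t)|I_\eps(q)-q|$, which is $\le d$ for $\eps$ small enough uniformly in $q$, so $H(q,t)\in M_d$. This gives the required $G_\tau$-homotopy between $I_\eps$ and the inclusion $M_d^-\hookrightarrow M_d$.

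\textbf{Main obstacle.} All difficulty is packed into the concentration estimate $|I_\eps(q)-q|=O(\eps)$. It is a routine rescaling argument leveraging radiality of the ground state $U$ and the local flatness of normal coordinates, but the uniformity in $q$ is what makes it non-trivial; it proceeds exactly as in the positive-solution analog (analogous to Proposition 5.10 in \cite{BBM07}), with the symmetry considerations already handled by Lemma \ref{lemma5}.
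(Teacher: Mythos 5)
Your proposal is correct and follows essentially the same route as the paper: well-definedness via Lemmas \ref{lemma5} and \ref{lemma7}, equivariance of $\beta$ and $\Phi_\eps^\tau$ under $\tau$, and the uniform estimate $|I_\eps(q)-q|=O(\eps)$ obtained by rescaling in normal coordinates (the paper stops at this estimate, leaving the straight-line homotopy implicit, and does not need the radial cancellation you invoke, since the crude $O(\eps)$ bound already suffices).
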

\begin{proof}
It is easy to check that
\begin{eqnarray}
\Phi_\eps^\tau(\tau q)=-\Phi_\eps^\tau(q)&&\beta(-u)=\tau\beta(u).
\end{eqnarray}
Moreover, by Lemma \ref{lemma5} and by Lemma \ref{lemma7}, for any
$q\in M_d^-$ we have $\beta\circ\Phi_\eps^\tau(q)=\beta(\Phi_\eps(q))\in M_d$,
and $I_\eps$ is well defined.

In order to show that $I_\eps$ is homotopic to identity,
we evaluate the difference between $I_\eps$ and the identity as follows.
\begin{multline}
I_\eps(q)-q=\frac{\displaystyle\int_M(x-q)|u^+|^pd\mu_g}
{\displaystyle\int_M|u^+|^pd\mu_g}=
\frac{\displaystyle\int_{B(0,\rho)}
z\left|U\left(\frac z\eps\right)\chi_\rho(|z|)\right|^p
\big|g_q(z)\big|^{\frac12}}
{\displaystyle \int_{B(0,\rho)}
\left|U\left(\frac z\eps\right)\chi_\rho(|z|)\right|^p
\big|g_q(z)\big|^{\frac12}}=\\
=
\frac{\displaystyle \eps\int_{B(0,\frac {\rho}{\eps})}z
\big|U(z)\chi_\rho(|\eps z|)\big|^p
\big|g_q(\eps z)\big|^{\frac12}}
{\displaystyle \int_{B(0,\frac{\rho}{\eps})}
\big|U(z)\chi_\rho(|\eps z|)\big|^p
\big|g_q(\eps z)\big|^{\frac12}},
\end{multline}
hence $|I_\eps(q)-q| <\eps c(M)$ for a constant $c(M)$ that does not depend
on $q$.
\end{proof}

Now, by previous lemma and by Theorem \ref{castroclapp} we can prove Theorem \ref{mainteo}.

In fact, we know that, if $\eps$ is small enough, there exist
$G_\tau-\cat(M-M_\tau)$
minimizers which change sign,
because they are antisymmetric.
We have only to prove that
any minimizer changes sign exactly once.
Let us call $\omega=\omega_\eps$ one of these minimizers.
Suppose that the set $\{x\in M\ :\ \omega_\eps(x)>0 \}$ has $k$ connected
components $M_1, \dots, M_k$. Set
\begin{equation}
\omega_i=\left\{
\begin{array}{lll}
\omega_\eps(x)&&  x\in M_i\cup\tau M_i;\\
0&&\text{elsewhere}
\end{array}
\right.
\end{equation}
For all $i$, $\omega_i\in {\cal N}^\tau_\eps$.
Furthermore we have
\begin{equation}
J_\eps(\omega)=\sum\limits_iJ_\eps(\omega_i),
\end{equation}
thus
\begin{equation}
m_\eps^\tau=J_\eps(\omega)=\sum\limits_{i=1}^k J_\eps(\omega_i)\geq k \cdot m_\eps^\tau,
\end{equation}
so $k=1$, that concludes the proof.

\nocite{BC91}


\end{document}